\documentclass[10pt,reqno]{amsart}

\usepackage[T1]{fontenc}
\usepackage[utf8]{inputenc}
\usepackage{amsmath,amssymb,mathtools,mathrsfs}
\IfFileExists{newtxtext.sty}
  {\usepackage{newtxtext,newtxmath}}
  {\usepackage{lmodern}}
\usepackage[a4paper,margin=1.15in]{geometry}
\usepackage{microtype}
\usepackage{xcolor}
\usepackage[
  colorlinks=true,
  linkcolor=blue,
  citecolor=blue,
  urlcolor=blue
]{hyperref}

\allowdisplaybreaks
\numberwithin{equation}{section}

\newcommand{\R}{\mathbb R}
\newcommand{\nalg}{\mathfrak n}
\newcommand{\zalg}{\mathfrak z}
\newcommand{\valg}{\mathfrak v}
\newcommand{\aalg}{\mathfrak a}
\newcommand{\RicB}{\Ric^{B}}
\DeclareMathOperator{\Ric}{Ric}
\DeclareMathOperator{\Aut}{Aut}
\DeclareMathOperator{\Der}{Der}
\DeclareMathOperator{\ad}{ad}
\DeclareMathOperator{\Sym}{Sym}
\DeclareMathOperator{\tr}{tr}
\DeclareMathOperator{\im}{im}

\DeclareMathOperator{\Id}{Id}

\theoremstyle{plain}
\newtheorem{theorem}{Theorem}[section]
\newtheorem{proposition}[theorem]{Proposition}
\newtheorem{lemma}[theorem]{Lemma}

\theoremstyle{remark}
\newtheorem{remark}[theorem]{Remark}
\newtheorem{example}[theorem]{Example}

\title[Harmonic Torsion of Generalized Nilsolitons]
{Harmonic Torsion of Generalized Nilsolitons on Nilpotent Lie Groups}

\author{Huibin Chen}
\address{Institute of Mathematics, School of Mathematical Sciences,
Nanjing Normal University, Nanjing 210023, P.R. China}
\email{chenhuibin@njnu.edu.cn}

\author{Zhiqi Chen}
\address{School of Mathematics and Statistics, Guangdong University of
Technology, Guangzhou 510520, P.R. China}
\email{chenzhiqi@gdut.edu.cn}

\author{Fuhai Zhu}
\address{Department of Mathematics, Nanjing University, Nanjing 210023,
P.R. China}
\email{zhufuhai@nju.edu.cn}

\subjclass[2020]{53E20, 53C30, 22E25, 17B30}
\keywords{generalized Ricci flow, generalized nilsoliton, nilpotent Lie
group, exact Courant algebroid, harmonic torsion}
\date{}

\hypersetup{
  pdftitle={Harmonic Torsion of Generalized Nilsolitons on Nilpotent Lie Groups},
  pdfauthor={Huibin Chen, Zhiqi Chen, Fuhai Zhu},
  pdfsubject={Generalized nilsolitons and harmonic torsion},
  pdfkeywords={generalized Ricci flow, generalized nilsoliton, nilpotent Lie group, harmonic torsion}
}

\begin{document}

\begin{abstract}
We prove that the torsion three-form in the preferred splitting of a
left-invariant generalized nilsoliton on a nilpotent Lie group is
harmonic.  This gives a positive answer to Question~7.12 proposed by Fusi--Lafuente--Stanfield in \cite{FLS2024}. 
\end{abstract}

\maketitle

\section{Introduction}\label{sec:introduction}

The generalized Ricci flow is a coupled evolution equation for a
Riemannian metric and a closed three-form.  In an isotropic splitting of an
exact Courant algebroid, it can be written as
\begin{equation*}
 \frac{\partial g}{\partial t}=-2\Ric_g+\frac12H^2,
 \qquad
 \frac{\partial H}{\partial t}=\Delta_gH,
\end{equation*}
where
\begin{equation*}
 H^2(X,Y)=g(\iota_XH,\iota_YH),
 \qquad
 \Delta_g=-(dd_g^*+d_g^*d).
\end{equation*}
The symmetric tensor in the metric equation is
$\RicB_{g,H}=\Ric_g-\frac14H^2$.  We refer to
\cite{GFS2021,Gualtieri2011} for the background on generalized metrics and
exact Courant algebroids.

Fusi, Lafuente and Stanfield developed a bracket-flow approach to the
homogeneous generalized Ricci flow and introduced algebraic generalized
Ricci solitons in \cite{FLS2024}.  When the underlying Lie algebra is
nilpotent, these solitons are called \emph{generalized nilsolitons}.  They
extend classical nilsolitons \cite{Lauret2001} and include examples with
nonzero torsion.  The generalized Ricci flow on nilpotent Lie groups was
also studied in \cite{Paradiso2021}.

Let $E$ be a left-invariant exact Courant algebroid over a nilpotent Lie
group $N$, and let $\mathcal G$ be a left-invariant generalized metric.
The preferred isotropic splitting determined by $\mathcal G$ gives an
identification
\begin{equation*}
 (E,\mathcal G)\simeq
 \bigl((\nalg\oplus\nalg^*)_H,\mathcal G(\bar g,0)\bigr),
\end{equation*}
where $\nalg$ is the Lie algebra of $N$ and
$H\in\Lambda^3\nalg^*$ is closed.  In this paper, harmonicity always means
harmonicity in the left-invariant Chevalley--Eilenberg complex, namely,
$d_\mu H=d_\mu^*H=0$.  It does not refer to global $L^2$-harmonicity on the
noncompact group.

In \cite[Question~7.12]{FLS2024}, Fusi, Lafuente and Stanfield asked
whether the preferred torsion of a generalized nilsoliton is necessarily
harmonic.  Our main result answers this question affirmatively for
nilpotent Lie groups.  All references to numbered statements and
equations in \cite{FLS2024} are to arXiv version~1.

\begin{theorem}\label{thm:main}
Let $E$ be a left-invariant exact Courant algebroid over a nilpotent Lie
group $N$, and let $\mathcal G$ be a left-invariant generalized
nilsoliton.  In the preferred splitting
\begin{equation*}
 (E,\mathcal G)\simeq
 \bigl((\nalg\oplus\nalg^*)_H,\mathcal G(\bar g,0)\bigr),
\end{equation*}
the closed three-form $H$ is $\bar g$-harmonic.
\end{theorem}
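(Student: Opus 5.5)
The plan is to derive $d_\mu^*H=0$ from the algebraic soliton equations of \cite{FLS2024}, written in the preferred splitting, by pairing them with carefully chosen test objects. Closedness $d_\mu H=0$ holds by hypothesis, so only co-closedness needs proof.

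\emph{Step 1: the soliton equations.} Recall from \cite{FLS2024} that a left-invariant generalized nilsoliton is an algebraic generalized Ricci soliton. In the preferred splitting this means the following. There exist $c\in\R$ and an infinitesimal automorphism $(D,B)$ of $(\nalg\oplus\nalg^*)_H$, with $D\in\Der(\nalg)$ and $B\in\Lambda^2\nalg^*$ satisfying $d_\mu B=-D\cdot H$ up to the sign conventions used there. With these, the bracket-flow velocity equals $c\Id+(D,B)$. Splitting into the metric and the three-form components gives
\begin{equation*}
 \RicB_{\bar g,H}=c\Id+\tfrac12(D+D^t),\qquad
 -d_\mu d_\mu^*H = cH + D\cdot H + d_\mu B',
\end{equation*}
where $B'$ is a two-form built from $B$ and the skew part of $D$. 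The defining property of the \emph{preferred} splitting is that the remaining gauge freedom $B$ has been normalised. I expect this normalisation to say that the $B$-component is orthogonal to exact forms, or is itself co-closed.

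\emph{Step 2: pair with $H$.} Take the $\bar g$-inner product of the three-form equation with $H$. The left side gives $-|d_\mu^*H|^2$. On the right, $\langle d_\mu B',H\rangle=\langle B',d_\mu^*H\rangle$. This term can be absorbed, or shown to vanish, using the normalisation from Step~1. What remains is $c|H|^2+\langle D\cdot H,H\rangle$, and the claim reduces to showing this quantity is $\ge 0$ and in fact equals $0$.

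\emph{Step 3: the main obstacle.} The hard step is to show $c|H|^2+\langle D\cdot H,H\rangle=0$. I would try a moment-map and scaling argument first. Consider the variation of the pair $(\mu,H)$ under the $\mathrm{GL}(\nalg)$-action in the direction $\Sym(D)$, together with the dilation of the pair. Since $\nalg$ is nilpotent, two standard identities are available. First, $\tr(\Ric_{\bar g}E)=\tfrac14\langle \pi(E)\mu,\mu\rangle$, where $\pi(E)\mu$ denotes the infinitesimal $\mathrm{GL}(\nalg)$-action on the bracket. Second, $\tr(H^2E)$ is the corresponding moment-map expression for the action $E\cdot H$. Taking $E=D$ and $E=\Id$ in the trace of the metric equation, and using $\pi(D)\mu=0$ because $D$ is a derivation, should produce two linear relations among $c$, $\tr D$, $|D|^2$, $|\mu|^2$, $|H|^2$ and $\langle D\cdot H,H\rangle$. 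Combining them with Step~2 is meant to force $|d_\mu^*H|^2\le0$.

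If this scaling bookkeeping does not close, the fallback is a different test. I would instead pair the three-form equation with $d_\mu$ of a suitable two-form and use $d_\mu^2=0$, exploiting the fact that $d_\mu^*H$ is itself co-closed. In that route, nilpotency (in particular $\tr\ad_X=0$, so that $d_\mu^*$ is the honest adjoint) is what makes the integration-by-parts identities valid.
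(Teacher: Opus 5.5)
\textbf{There is a genuine gap: Step~3 cannot work, and your fallback does not contain the idea the proof needs.}

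\textbf{Why the scalar argument cannot close.} In the preferred splitting there is no residual gauge term. By \cite[Proposition~7.4]{FLS2024} the system is $\RicB_{\mu,H}=\lambda\Id+D$ and $-d_\mu d_\mu^*H=-2\lambda H+\rho(D)H$. Pairing with $H$ gives $3|d_\mu^*H|^2=2\lambda|H|^2+3\tr(H^2D)$. Your moment-map/scaling test yields only two scalar facts:
\begin{itemize}
\item $\tr\RicB_{\mu,H}=-\frac14(|\mu|^2+|H|^2)$, with $|\mu|^2=\sum_{i,j}|\mu(e_i,e_j)|^2$;
\item $\tr(\RicB_{\mu,H}D)=-\frac14\tr(H^2D)$, since $\tr(\Ric_\mu D)=0$.
\end{itemize}
Substituting $D=\RicB_{\mu,H}-\lambda\Id$ to eliminate $\tr(H^2D)$ gives exactly
\[
 |d_\mu^*H|^2=-\lambda\Bigl(|\mu|^2+\tfrac13|H|^2\Bigr)-4\tr\bigl((\RicB_{\mu,H})^2\bigr).
\]
This identity has no definite sign. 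For $H=0$ it is just the classical formula $\lambda=-4\tr(\Ric_\mu^2)/|\mu|^2$. Nothing in these relations controls $\tr(H^2D)$, so no combination of them can force $d_\mu^*H=0$.

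\textbf{What the fallback is missing.} Your fallback points the right way, but it stops before the decisive step.
\begin{itemize}
\item The ``suitable two-form'' must be $\eta=d_\mu^*H$ itself. Pair the three-form equation with $d_\mu\eta$ and use $[\rho(D),d_\mu]=0$, which holds because $D$ is a derivation. This gives
\[
 \tfrac16|d_\mu\eta|^2=\tr(\eta^2\RicB_{\mu,H})=\tr(\eta^2\Ric_\mu)-\tfrac14\tr(\eta^2H^2).
\]
The paper obtains this from the operator identity $\rho(\RicB_{\mu,H})\eta+d_\mu^*d_\mu\eta=0$, by applying $d_\mu^*$ and the variation formula $d^*_{\theta(A)\mu}=[d_\mu^*,\rho(A^t)]$.
\item This identity alone is inconclusive. $\Ric_\mu$ is not negative semidefinite (it is positive on central directions not orthogonal to $[\nalg,\nalg]$), so $\tr(\eta^2\Ric_\mu)$ can be positive.
\item The missing ingredient is the Hodge--Ricci inequality
\[
 \tfrac16|d_\mu\alpha|^2\ge2\tr(\alpha^2\Ric_\mu)\qquad\text{for all }\alpha\in\Lambda^2\nalg^*.
\]
It is proved by putting $\alpha$ in orthogonal normal form and writing the difference as an explicit sum of squares.
\item Nilpotency enters through $\Ric_\mu=M_\mu$, that is, the absence of Killing-form and mean-curvature terms. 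It does not enter merely through $\tr\ad_X=0$, which is only unimodularity.
\end{itemize}

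\textbf{How the proof then closes.} The Hodge--Ricci inequality together with $\tr(\eta^2H^2)\ge0$ gives $\frac1{12}|d_\mu\eta|^2+\frac14\tr(\eta^2H^2)\le0$, so $d_\mu\eta=0$. Then $|\eta|^2=\frac13\bar g(H,d_\mu\eta)=0$. The co-closedness of $\eta$ that you planned to exploit plays no role. What matters is that $\eta$ lies in the image of $d_\mu^*$.
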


In Section~\ref{sec:generalized-nilsolitons}, we recall the algebraic
soliton equations and derive the identity satisfied by $d_\mu^*H$.  The
Hodge--Ricci inequality is proved in Section~\ref{sec:hodge-ricci}.
Theorem~\ref{thm:main} and Corollary~\ref{cor:uniqueness} are proved in
Section~\ref{sec:harmonicity}.  Section~\ref{sec:refinements} contains the
two-step and almost-abelian identities, together with an example showing
that closed invariant three-forms need not be coclosed.

\section{Generalized nilsolitons and the codifferential identity}\label{sec:generalized-nilsolitons}

Throughout the paper, we follow the conventions of \cite{FLS2024}.  Fix a
$\bar g$-orthonormal basis $\{e_i\}$ of $\nalg$.  The metric $\bar g$ is
extended to alternating tensors by full contraction:
for $\sigma,\tau\in\Lambda^k\nalg^*$,
\begin{equation}\label{eq:FLS-form-pairing}
 \bar g(\sigma,\tau)
 =\sum_{i_1,\ldots,i_k}
 \sigma(e_{i_1},\ldots,e_{i_k})
 \tau(e_{i_1},\ldots,e_{i_k}),
 \qquad |\sigma|^2=\bar g(\sigma,\sigma).
\end{equation}
The operator $d_\mu^*$ is the Riemannian codifferential.  Since a
nilpotent Lie algebra is unimodular, it agrees on left-invariant forms
with the codifferential of the Chevalley--Eilenberg complex.
For $\alpha\in\Lambda^k\nalg^*$ and
$\beta\in\Lambda^{k+1}\nalg^*$, the convention
\eqref{eq:FLS-form-pairing} gives
\begin{equation}\label{eq:codifferential-pairing}
 \bar g(d_\mu^*\beta,\alpha)
 =\frac1{k+1}\bar g(\beta,d_\mu\alpha).
\end{equation}

For a two-form $\alpha$ and a three-form $H$, we use the notation of
\cite{FLS2024} and define the corresponding self-adjoint endomorphisms by
\begin{equation}\label{eq:H-square}
 \bar g(\alpha^2X,Y)=\bar g(\iota_X\alpha,\iota_Y\alpha),
 \qquad
 \bar g(H^2X,Y)=\bar g(\iota_XH,\iota_YH).
\end{equation}
Thus the Bismut--Ricci endomorphism is
\begin{equation}\label{eq:RicB}
 \RicB_{\mu,H}=\Ric_\mu-\frac14H^2.
\end{equation}

For $A\in\mathfrak{gl}(\nalg)$, let $\rho(A)$ denote the induced
representation on forms:
\begin{equation*}
 (\rho(A)\sigma)(X_1,\ldots,X_k)
 =-\sum_{r=1}^k
 \sigma(X_1,\ldots,AX_r,\ldots,X_k).
\end{equation*}
Thus $\rho(\Id)\sigma=-k\sigma$ for a $k$-form $\sigma$.

\subsection{The algebraic soliton equations}\label{subsec:soliton-equations}

By \cite[Proposition~7.4]{FLS2024}, an algebraic generalized Ricci
soliton is determined by a closed three-form $H$, a number
$\lambda\in\R$, and a derivation $D\in\Der(\nalg)$ satisfying
\begin{equation}\label{eq:soliton-system}
 \RicB_{\mu,H}=\lambda\Id+D,
 \qquad
 \Delta_\mu H=\lambda H+\rho(\RicB_{\mu,H})H
          =-2\lambda H+\rho(D)H.
\end{equation}
Here $\Delta_\mu=-(d_\mu d_\mu^*+d_\mu^*d_\mu)$.  Since
$\RicB_{\mu,H}$ is symmetric, the
derivation $D$ is symmetric.  When $\nalg$ is nilpotent, such a soliton is
called a generalized nilsoliton.

\subsection{The codifferential identity}\label{subsec:codifferential-identity}

We first record the relation between the representation on two-forms and
the symmetric endomorphism $\alpha^2$.

\begin{lemma}\label{lem:representation-contraction}
Let $A\in\Sym(\nalg)$ and $\alpha\in\Lambda^2\nalg^*$.  Then
\begin{equation}\label{eq:representation-contraction}
 \bar g(\rho(A)\alpha,\alpha)=-2\tr(\alpha^2A).
\end{equation}
\end{lemma}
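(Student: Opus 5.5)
The proof is a direct computation in the fixed $\bar g$-orthonormal basis $\{e_i\}$. Expanding both sides in coordinates reduces the identity to a single index contraction, using only the symmetry of $A$ and the antisymmetry of $\alpha$.

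First expand the left-hand side. Write $\alpha_{ij}=\alpha(e_i,e_j)$ and $A_{kj}=\bar g(Ae_j,e_k)$, so that $Ae_j=\sum_k A_{kj}e_k$ and $A_{kj}=A_{jk}$. By definition,
\begin{equation*}
 (\rho(A)\alpha)(e_i,e_j)=-\alpha(Ae_i,e_j)-\alpha(e_i,Ae_j)
 =-\sum_k\bigl(A_{ki}\alpha_{kj}+A_{kj}\alpha_{ik}\bigr).
\end{equation*}
Pairing with $\alpha$ via the full-contraction convention \eqref{eq:FLS-form-pairing} gives
\begin{equation*}
 \bar g(\rho(A)\alpha,\alpha)=-\sum_{i,j,k}\bigl(A_{ki}\alpha_{kj}\alpha_{ij}+A_{kj}\alpha_{ik}\alpha_{ij}\bigr).
\end{equation*}
The two sums coincide after swapping the roles of $i$ and $j$ and using $\alpha_{ij}=-\alpha_{ji}$ twice. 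The left-hand side therefore equals $-2\sum_{i,j,k}A_{kj}\alpha_{ik}\alpha_{ij}$.

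Next compute the right-hand side from \eqref{eq:H-square}. We have
\begin{equation*}
 \bar g(\alpha^2e_k,e_j)=\bar g(\iota_{e_k}\alpha,\iota_{e_j}\alpha)=\sum_i\alpha_{ki}\alpha_{ji},
\end{equation*}
since one-forms are paired by the same convention. Hence
\begin{equation*}
 \tr(\alpha^2A)=\sum_{j,k}(\alpha^2)_{jk}A_{kj}=\sum_{i,j,k}\alpha_{ki}\alpha_{ji}A_{kj}.
\end{equation*}
Using antisymmetry twice, $\alpha_{ki}\alpha_{ji}=\alpha_{ik}\alpha_{ij}$, so this trace is exactly the sum obtained from the left-hand side. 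This gives \eqref{eq:representation-contraction}.

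The only real obstacle is bookkeeping: keeping the factor $2$ and the sign straight. Two conventions must be checked. The pairing \eqref{eq:FLS-form-pairing} has no $1/k!$ normalization, and this is what makes the two terms of $\rho(A)$ add to give exactly the factor $2$. The minus sign in the definition of $\rho$ supplies the overall sign.
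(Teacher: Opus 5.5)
Your computation is correct, and it takes a slightly more general route than the paper. The paper does the same kind of coordinate check, but it first uses the symmetry of $A$ to choose an orthonormal eigenbasis $Ae_i=a_ie_i$. Then $(\rho(A)\alpha)(e_i,e_j)=-(a_i+a_j)\alpha_{ij}$ and $\bar g(\alpha^2e_i,e_i)=\sum_j\alpha_{ij}^2$, so both sides reduce at once to $-2\sum_{i<j}(a_i+a_j)\alpha_{ij}^2$, with no off-diagonal terms to track.

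Your version works in an arbitrary orthonormal basis and pays for it with the relabeling step $S_1=S_2$. Contrary to your opening sentence, you never actually use $A_{kj}=A_{jk}$. The relabeling $i\leftrightarrow j$ together with antisymmetry of $\alpha$, and the trace formula $\tr(\alpha^2A)=\sum_{j,k}(\alpha^2)_{jk}A_{kj}$, both hold for any $A$. So your argument proves the identity for every $A\in\mathfrak{gl}(\nalg)$. This is consistent with the skew-symmetric case, where both sides vanish: $\rho(A)$ is then skew-adjoint, and $\tr(\alpha^2A)=0$ because $\alpha^2$ is symmetric. The paper's diagonalization is the more transparent computation, but it genuinely relies on the symmetry hypothesis, whereas your index computation shows that hypothesis is not needed for this lemma.
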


\begin{proof}
Choose an orthonormal basis such that $Ae_i=a_i e_i$ and write
$\alpha=\sum_{i<j}\alpha_{ij}e^{ij}$.  Then
\begin{equation*}
 \bar g(\rho(A)\alpha,\alpha)
 =-2\sum_{i<j}(a_i+a_j)\alpha_{ij}^2.
\end{equation*}
On the other hand,
$\bar g(\alpha^2e_i,e_i)=\sum_{j\ne i}\alpha_{ij}^2$.  Taking the
trace of $\alpha^2A$ proves \eqref{eq:representation-contraction}.
\end{proof}

\begin{proposition}\label{prop:codifferential-identity}
Let $(\mu,H)$ be an algebraic generalized Ricci soliton and set
$\eta=d_\mu^*H$.  Then
\begin{equation}\label{eq:codifferential-operator}
 \rho(\RicB_{\mu,H})\eta+d_\mu^*d_\mu\eta=0.
\end{equation}
Consequently,
\begin{equation}\label{eq:codifferential-energy}
 0=\frac16|d_\mu\eta|^2-\tr(\eta^2\RicB_{\mu,H}).
\end{equation}
\end{proposition}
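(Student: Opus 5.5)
The plan is to apply the codifferential $d_\mu^*$ to the second soliton equation in \eqref{eq:soliton-system}, in its form $\Delta_\mu H=-2\lambda H+\rho(D)H$. We then recognize the result as $\rho(\RicB_{\mu,H})\eta$, and finally pair with $\eta$ to obtain the energy identity.

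First, since $H$ is closed, $\Delta_\mu H=-d_\mu d_\mu^*H=-d_\mu\eta$. The soliton equation therefore reads
\begin{equation*}
 -d_\mu\eta=-2\lambda H+\rho(D)H .
\end{equation*}
We need two commutation facts for the symmetric derivation $D$.

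(a) $\rho(D)$ commutes with $d_\mu$. The one-parameter group $e^{tD}$ consists of automorphisms of $\mu$, so $\rho(e^{tD})=e^{t\rho(D)}$ commutes with the Chevalley--Eilenberg differential. Differentiating at $t=0$ gives the claim.

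(b) $\rho(D)$ commutes with $d_\mu^*$. With respect to the full-contraction pairing \eqref{eq:FLS-form-pairing}, the adjoint of $\rho(A)$ is $\rho(A^T)$; this is checked in a basis, exactly as in Lemma~\ref{lem:representation-contraction}. Since $D=D^T$, the operator $\rho(D)$ is self-adjoint. Taking adjoints of (a) then yields (b).

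Applying $d_\mu^*$ to the displayed equation and using (b) gives
\begin{equation*}
 -d_\mu^*d_\mu\eta=-2\lambda\eta+\rho(D)\eta .
\end{equation*}
Since $\eta$ is a two-form, $\rho(\Id)\eta=-2\eta$. Hence the right-hand side equals $\rho(\lambda\Id+D)\eta=\rho(\RicB_{\mu,H})\eta$, by the first soliton equation. This is exactly \eqref{eq:codifferential-operator}.

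For \eqref{eq:codifferential-energy}, take the $\bar g$-inner product of \eqref{eq:codifferential-operator} with $\eta$.

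\begin{itemize}
 \item By \eqref{eq:codifferential-pairing} with $k=2$, $\alpha=\eta$ and $\beta=d_\mu\eta$, we get $\bar g(d_\mu^*d_\mu\eta,\eta)=\frac13|d_\mu\eta|^2$.
 \item By Lemma~\ref{lem:representation-contraction} with $A=\RicB_{\mu,H}$, which is symmetric, we get $\bar g(\rho(\RicB_{\mu,H})\eta,\eta)=-2\tr(\eta^2\RicB_{\mu,H})$.
\end{itemize}

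Summing gives $0=\frac13|d_\mu\eta|^2-2\tr(\eta^2\RicB_{\mu,H})$, and dividing by $2$ yields \eqref{eq:codifferential-energy}.

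The only genuinely non-formal step is (b), the commutation of $\rho(D)$ with $d_\mu^*$. It requires both that $D$ is a derivation and that $D$ is symmetric, together with care about the normalization of the pairing. I would verify the adjointness of $\rho(A)$ directly on basis monomials $e^{i_1\cdots i_k}$.
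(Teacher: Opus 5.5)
Your proposal is correct and follows the paper's proof: you apply $d_\mu^*$ to the second soliton equation, commute the representation term past $d_\mu^*$, and then pair with $\eta$ using \eqref{eq:codifferential-pairing} and Lemma~\ref{lem:representation-contraction}. The only difference is in the commutator step. The paper obtains $[d_\mu^*,\rho(R)]=-\lambda d_\mu^*$ for $R=\lambda\Id+D$ by citing the variation formula \cite[equation~(36)]{FLS2024} together with $\theta(R)\mu=-\lambda\mu$. You instead prove $[d_\mu^*,\rho(D)]=0$ directly, from automorphism invariance of $d_\mu$ and self-adjointness of $\rho(D)$, and absorb the $\lambda$-terms through the degree count $\rho(\Id)\eta=-2\eta$.
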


\begin{proof}
Set $R=\RicB_{\mu,H}$.  Since $R=\lambda\Id+D$ and $D$ is a
derivation, the induced action on the Lie bracket $\mu$ satisfies
$\theta(R)\mu=-\lambda\mu$.  The variation formula
\cite[equation~(36)]{FLS2024} is
\begin{equation*}
 d^*_{\theta(A)\mu}=[d^*_{\mu},\rho(A^t)].
\end{equation*}
As $R^t=R$, it follows that
\begin{equation*}
 -\lambda d_\mu^*=d_\mu^*\rho(R)-\rho(R)d_\mu^*.
\end{equation*}

Since $d_\mu H=0$, the second equation in \eqref{eq:soliton-system}
becomes $\rho(R)H+d_\mu d_\mu^*H=-\lambda H$.  Applying $d_\mu^*$ and
using the preceding
commutator identity gives
\begin{equation*}
 \rho(R)d_\mu^*H+d_\mu^*d_\mu d_\mu^*H=0.
\end{equation*}
This is \eqref{eq:codifferential-operator}; it is equation~(53) in
\cite{FLS2024}.

Taking the $\bar g$-inner product with $\eta$ and using
Lemma~\ref{lem:representation-contraction}, we obtain
\begin{equation*}
 0=\bar g(\rho(R)\eta,\eta)
   +\bar g(d_\mu^*d_\mu\eta,\eta)
  =-2\tr(\eta^2R)+\frac13|d_\mu\eta|^2.
\end{equation*}
Dividing by two proves \eqref{eq:codifferential-energy}.
\end{proof}

\section{A Hodge--Ricci inequality on nilpotent Lie algebras}\label{sec:hodge-ricci}

We prove Theorem~\ref{thm:hodge-ricci}.  The main algebraic lemma does not
require the Jacobi identity.  Nilpotency enters only when the moment-map
term is identified with the Ricci operator.

Let $(V,\bar g)$ be a Euclidean vector space and let
$\mu\in\Lambda^2V^*\otimes V$.  For $\alpha\in\Lambda^2V^*$, set
\begin{equation*}
\begin{split}
 (d_\mu\alpha)(X,Y,Z)={}&-\alpha(\mu(X,Y),Z)
 +\alpha(\mu(X,Z),Y)\\
 &-\alpha(\mu(Y,Z),X).
\end{split}
\end{equation*}
Define $M_\mu\in\Sym(V)$ by
\begin{equation}\label{eq:moment-map-Ricci}
\begin{split}
 \bar g(M_\mu X,Y)={}&-\frac12\sum_i
 \bar g(\mu(X,e_i),\mu(Y,e_i))\\
 &+\frac14\sum_{i,j}
 \bar g(\mu(e_i,e_j),X)\bar g(\mu(e_i,e_j),Y),
\end{split}
\end{equation}
where $\{e_i\}$ is an orthonormal basis of $V$.

\begin{lemma}\label{lem:algebraic-hodge-ricci}
With the notation above, for every $\alpha\in\Lambda^2V^*$,
\begin{equation}\label{eq:algebraic-hodge-ricci}
 \frac16|d_\mu\alpha|^2-2\tr(\alpha^2M_\mu)\geq0.
\end{equation}
\end{lemma}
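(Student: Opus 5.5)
The plan is to convert \eqref{eq:algebraic-hodge-ricci} into an identity among contractions of $\mu$ and a skew endomorphism, and then show that what remains is nonnegative. Write $\alpha(X,Y)=\bar g(AX,Y)$ with $A^t=-A$, so that $\alpha^2=A^tA=-A^2$. Set $\mu_{ij}=\mu(e_i,e_j)$ and
\begin{equation*}
 S(X,Y,Z)=\bar g(A\mu(X,Y),Z).
\end{equation*}
This three-tensor is skew in $(X,Y)$, and the defining formula for $d_\mu$ reads $d_\mu\alpha=-\sum_{\mathrm{cyc}}S$. With the full-contraction norm \eqref{eq:FLS-form-pairing}, expanding the square of the cyclic sum gives
\begin{equation*}
 |d_\mu\alpha|^2=3|S|^2+6C,\qquad C=\sum_{i,j,k}S(e_i,e_j,e_k)S(e_j,e_k,e_i),
\end{equation*}
where $|S|^2=\sum_{i,j}|A\mu_{ij}|^2=\sum_{i,j}\bar g(\alpha^2\mu_{ij},\mu_{ij})$.

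Next I compute $2\tr(\alpha^2M_\mu)$ directly from \eqref{eq:moment-map-Ricci}, inserting $E=\alpha^2$. The second term of $M_\mu$ contributes $\frac12\sum_{i,j}\bar g(\alpha^2\mu_{ij},\mu_{ij})=\frac12|S|^2$. The first term contributes
\begin{equation*}
 -\sum_{i,j}\bar g(\mu(\alpha^2e_i,e_j),\mu_{ij}).
\end{equation*}
Writing $L_j=\mu(\cdot,e_j)$, this equals $-\sum_j\tr(L_j^tL_jA^tA)=-\sum_j|L_jA^t|^2=-|U|^2$, where
\begin{equation*}
 |U|^2=\sum_{i,j}|\mu(Ae_i,e_j)|^2 .
\end{equation*}
The $|S|^2$ contributions cancel exactly, and the claimed inequality becomes
\begin{equation*}
 \tfrac16|d_\mu\alpha|^2-2\tr(\alpha^2M_\mu)=C+|U|^2\ \ge 0 .
\end{equation*}
No Jacobi identity has been used at this point, which is consistent with the remark preceding the lemma.

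It remains to show $C\ge -|U|^2$, and I expect this to be the main obstacle. My first attempt will be to rewrite $C$ by moving $A$ across the inner products (using $A^t=-A$) and resumming over the basis index. Contracting $\sum_i\bar g(\mu_{jk},Ae_i)e_i=-A\mu_{jk}$ gives
\begin{equation*}
 C=\sum_{j,k}\bar g\bigl(A\mu(A\mu_{jk},e_j),e_k\bigr).
\end{equation*}
The aim is to recognise this as a pairing $\langle U,\sigma U\rangle$ (or $\langle U,\sigma U'\rangle$ with $|U'|=|U|$), where $U(X,Y,Z)=\bar g(\mu(AX,Y),Z)$ and $\sigma$ is an index permutation, possibly combined with the transposition built into $\mu$. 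Cauchy--Schwarz would then give $|C|\le|U|^2$.

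If a direct pairing does not appear, the fallback is to work in a unitary eigenbasis of $A$ over $\mathbb C$, with eigenvalues $\pm\sqrt{-1}\,a_r$. There $\pi(A)$ acts diagonally on the structure constants $\mu_{ij}^k$ with weights $a_k-a_i-a_j$, and $C+|U|^2$ should become a sum of weight expressions. The plan is to show that these assemble into $\tfrac12\sum|\mu_{ij}^k|^2(a_i+a_j+\ldots)^2$-type squares, i.e.\ a manifestly nonnegative quadratic form in the $a$'s for each triple $(i,j,k)$. The model case to check first is a single bracket $\mu_{12}=e_3$ with $A$ a rotation in a coordinate plane; this fixes the signs before the general grouping of terms.
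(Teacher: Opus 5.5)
Your reduction is correct: $\frac16|d_\mu\alpha|^2=\frac12|S|^2+C$ and $2\tr(\alpha^2M_\mu)=\frac12|S|^2-|U|^2$. This is the paper's \eqref{eq:trace-K-mu} with $\mathsf U=|U|^2$ and $\mathsf V=|S|^2$. As written, however, the proposal stops where the content of the lemma lies. The inequality $C\ge-|U|^2$ is only announced, with two candidate strategies, so there is a gap.

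The gap closes quickly along your first plan; you moved only one of the two copies of $A$. Move both. Each move uses $\sum_k Ae_k\otimes e_k=\sum_k e_k\otimes A^te_k$ and produces a sign from $A^t=-A$. With $U_{ijk}=\bar g(\mu(Ae_i,e_j),e_k)$ this gives
\[
 C=\sum_{i,j,k}\bar g(\mu(e_i,e_j),Ae_k)\,\bar g(\mu(e_j,e_k),Ae_i)
 =\sum_{i,j,k}\bar g(\mu(Ae_i,e_j),e_k)\,\bar g(\mu(e_j,Ae_k),e_i)
 =-\sum_{i,j,k}U_{ijk}U_{kji}.
\]
So the permutation you were looking for is the swap of the first and third slots, combined with the skew-symmetry of $\mu$. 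Cauchy--Schwarz then gives $|C|\le|U|^2$, and more precisely
\[
 \frac16|d_\mu\alpha|^2-2\tr(\alpha^2M_\mu)=\frac12\sum_{i,j,k}\bigl(U_{ijk}-U_{kji}\bigr)^2\ge0 .
\]

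Once completed, this is a genuinely different route from the paper's. The paper puts $K$ in real normal form and rewrites the quantity as $\frac16|d_\mu\alpha|^2+\sum(\kappa_i^2+\kappa_j^2-\kappa_k^2)(c_{ij}^{\,k})^2$. It then regroups by unordered triples via \eqref{eq:triple-SOS}, with separate bookkeeping for coincident indices. If you expand $(U_{ijk}-U_{kji})^2$ in that basis, you recover exactly the squares of \eqref{eq:full-Darboux-SOS}: the three squares per triple and the leftover $\kappa^2(c_{ij}^{\,k})^2$ terms. So your identity is the invariant form of the paper's, obtained without the normal form or any case analysis. The paper's version has the advantage of displaying the individual squares explicitly in the structure constants.

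Your complex-eigenbasis fallback should be dropped as stated. No expression diagonal in the $|\mu_{ij}^k|^2$ can equal this quantity. On each triple, $d_\mu\alpha$ mixes three different structure constants, so the quadratic form has cross terms of indefinite sign. That is exactly why the paper needs the $3\times3$ identity \eqref{eq:triple-SOS} rather than a termwise weight argument.
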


\begin{proof}
Write $K=K_\alpha$.  Since $\alpha^2=K^*K$, the cyclicity of the trace
and \eqref{eq:moment-map-Ricci} give
\begin{equation}\label{eq:trace-K-mu}
 \tr(\alpha^2M_\mu)=-\frac12\mathsf U+\frac14\mathsf V,
\end{equation}
where
\begin{equation*}
 \mathsf U=\sum_{a,i}|\mu(Ke_a,e_i)|^2,
 \qquad
 \mathsf V=\sum_{i,j}|K\mu(e_i,e_j)|^2.
\end{equation*}

We use the orthogonal normal form of $K$.  After changing the orthonormal
basis, there exist $m\geq0$ and numbers
$\kappa_1,\ldots,\kappa_m>0$ such that
\begin{equation*}
 Ke_{2s-1}=\kappa_s e_{2s},
 \qquad
 Ke_{2s}=-\kappa_s e_{2s-1},
 \qquad 1\leq s\leq m,
\end{equation*}
and $Ke_r=0$ for $r>2m$.  Define an involution $\tau$ by
\begin{equation*}
 \tau(2s-1)=2s,
 \qquad
 \tau(2s)=2s-1,
 \qquad
 \tau(r)=r\quad\text{if }r>2m.
\end{equation*}
We also set
\begin{equation*}
\begin{split}
 &\kappa_{2s-1}=\kappa_{2s}=\kappa_s,
 \qquad
 \varepsilon_{2s-1}=1,
 \qquad
 \varepsilon_{2s}=-1,\\
 &\kappa_r=0,
 \qquad
 \varepsilon_r=1\quad\text{if }r>2m.
\end{split}
\end{equation*}
Thus $Ke_i=\varepsilon_i\kappa_i e_{\tau(i)}$ for every $i$.
On every nonzero block,
$\varepsilon_{\tau(i)}=-\varepsilon_i$; no such relation is used on
$\ker K$, where $\kappa_i=0$.

Write $c_{ij}^{\,k}=\bar g(\mu(e_i,e_j),e_k)$.  Since the first two
indices in $\mathsf V$ are ordered, we have
\begin{equation*}
 \mathsf U=\sum_{i<j,k}
 (\kappa_i^2+\kappa_j^2)(c_{ij}^{\,k})^2,
 \qquad
 \mathsf V=2\sum_{i<j,k}\kappa_k^2(c_{ij}^{\,k})^2.
\end{equation*}
It follows from \eqref{eq:trace-K-mu} that
\begin{equation}\label{eq:pre-Darboux-SOS}
\begin{split}
 &\frac16|d_\mu\alpha|^2-2\tr(\alpha^2M_\mu)\\
 &\qquad=\frac16|d_\mu\alpha|^2
 +\sum_{i<j,k}
 (\kappa_i^2+\kappa_j^2-\kappa_k^2)(c_{ij}^{\,k})^2.
\end{split}
\end{equation}

Fix $i<j<\ell$ and put
$a=\kappa_i$, $b=\kappa_j$ and $c=\kappa_\ell$.  Define
\begin{equation*}
\begin{split}
 x_{ij\ell}&=-\varepsilon_{\tau(\ell)}
 c_{ij}^{\,\tau(\ell)},\\
 y_{ij\ell}&=-\varepsilon_{\tau(i)}
 c_{j\ell}^{\,\tau(i)},\\
 z_{ij\ell}&=-\varepsilon_{\tau(j)}
 c_{\ell i}^{\,\tau(j)}.
\end{split}
\end{equation*}
The definitions of $d_\mu$ and $K$ give
\begin{equation*}
 (d_\mu\alpha)(e_i,e_j,e_\ell)
 =cx_{ij\ell}+ay_{ij\ell}+bz_{ij\ell}.
\end{equation*}
Since $d_\mu\alpha$ is a three-form,
\begin{equation*}
 \frac16|d_\mu\alpha|^2
 =\sum_{i<j<\ell}
 \bigl(d_\mu\alpha(e_i,e_j,e_\ell)\bigr)^2.
\end{equation*}
For brevity, write $x=x_{ij\ell}$, $y=y_{ij\ell}$ and
$z=z_{ij\ell}$.  The corresponding terms in
\eqref{eq:pre-Darboux-SOS} satisfy
\begin{equation}\label{eq:triple-SOS}
\begin{split}
 &(cx+ay+bz)^2+(a^2+b^2-c^2)x^2\\
 &\quad +(b^2+c^2-a^2)y^2+(c^2+a^2-b^2)z^2\\
 &\qquad=(ax+cy)^2+(bx+cz)^2+(by+az)^2.
\end{split}
\end{equation}

It remains to determine the terms not contained in
\eqref{eq:triple-SOS}.  Given $c_{pq}^{\,k}$ with $p<q$, if
$\tau(k)\notin\{p,q\}$, then it occurs in the unique unordered triple
$\{p,q,\tau(k)\}$.  If $\tau(k)=p$, its coefficient in
\eqref{eq:pre-Darboux-SOS} is $\kappa_q^2$; if $\tau(k)=q$, its
coefficient is $\kappa_p^2$.  Therefore
\begin{equation}\label{eq:full-Darboux-SOS}
\begin{split}
 &\frac16|d_\mu\alpha|^2-2\tr(\alpha^2M_\mu)\\
 ={}&\sum_{i<j<\ell}
 \Bigl[(\kappa_i x_{ij\ell}+\kappa_\ell y_{ij\ell})^2
 +(\kappa_j x_{ij\ell}+\kappa_\ell z_{ij\ell})^2\\
 &\hspace{35mm}
 +(\kappa_j y_{ij\ell}+\kappa_i z_{ij\ell})^2\Bigr]\\
 &+\sum_{\substack{i<j,\ k\\ \tau(k)=i}}
 \kappa_j^2(c_{ij}^{\,k})^2
 +\sum_{\substack{i<j,\ k\\ \tau(k)=j}}
 \kappa_i^2(c_{ij}^{\,k})^2.
\end{split}
\end{equation}
All terms on the right-hand side of \eqref{eq:full-Darboux-SOS} are
nonnegative.  This proves
\eqref{eq:algebraic-hodge-ricci}.
\end{proof}

\begin{proof}[Proof of Theorem~\ref{thm:hodge-ricci}]
Let $\mu$ be the Lie bracket of $\nalg$.  The Ricci operator of a metric
nilpotent Lie algebra is given by
\begin{equation*}
\begin{split}
 \bar g(\Ric_\mu X,Y)={}&-\frac12\sum_i
 \bar g(\mu(X,e_i),\mu(Y,e_i))\\
 &+\frac14\sum_{i,j}
 \bar g(\mu(e_i,e_j),X)\bar g(\mu(e_i,e_j),Y).
\end{split}
\end{equation*}
Indeed, both the Killing form and the mean-curvature vector vanish; see
\cite[Corollary~7.38]{Besse1987}.  Thus $\Ric_\mu=M_\mu$, and
\eqref{eq:hodge-ricci-intro} follows from
Lemma~\ref{lem:algebraic-hodge-ricci}.  Finally,
\begin{equation*}
\begin{split}
 \frac16|d_\mu\alpha|^2-\tr(\alpha^2\Ric_\mu)
 ={}&\frac1{12}|d_\mu\alpha|^2\\
 &+\frac12\left(\frac16|d_\mu\alpha|^2
 -2\tr(\alpha^2\Ric_\mu)\right),
\end{split}
\end{equation*}
which proves \eqref{eq:hodge-ricci-coercive}.
\end{proof}

\begin{remark}
The proof of Lemma~\ref{lem:algebraic-hodge-ricci} uses only the
skew-symmetry of $\mu$ in its two input variables.  Neither the Jacobi
identity nor nilpotency is needed there.  Nilpotency is used precisely in
the identity $\Ric_\mu=M_\mu$.
\end{remark}

\section{Harmonic torsion and uniqueness}\label{sec:harmonicity}

We now combine the codifferential identity with the Hodge--Ricci
inequality.  The proof does not require a decomposition of the nilpotent
Lie algebra or a restriction on the image of $d_\mu^*$.

\begin{proof}[Proof of Theorem~\ref{thm:main}]
Let $(\nalg,\mu,\bar g,H)$ represent the generalized nilsoliton in its
preferred splitting and set $\eta=d_\mu^*H$.  By
Proposition~\ref{prop:codifferential-identity} and \eqref{eq:RicB},
\begin{equation}\label{eq:main-energy}
 0=\frac16|d_\mu\eta|^2-\tr(\eta^2\Ric_\mu)
   +\frac14\tr(\eta^2H^2).
\end{equation}
The last term is nonnegative.  Indeed, by \eqref{eq:H-square}, both
$\eta^2$ and $H^2$ are positive semidefinite, and hence
\begin{equation*}
 \tr(\eta^2H^2)
 =\tr\!\left((\eta^2)^{1/2}H^2(\eta^2)^{1/2}\right)\geq0.
\end{equation*}
Using \eqref{eq:hodge-ricci-coercive}, equation
\eqref{eq:main-energy} becomes
\begin{equation*}
\begin{split}
 0={}&\frac1{12}|d_\mu\eta|^2
 +\frac12\left(\frac16|d_\mu\eta|^2
 -2\tr(\eta^2\Ric_\mu)\right)\\
 &+\frac14\tr(\eta^2H^2).
\end{split}
\end{equation*}
Every term is nonnegative, and therefore $d_\mu\eta=0$.  Since
$\eta=d_\mu^*H$, the codifferential pairing gives
\begin{equation*}
 |\eta|^2=\bar g(d_\mu^*H,\eta)
 =\frac13\bar g(H,d_\mu\eta)=0.
\end{equation*}
Thus $d_\mu^*H=0$.  Since $d_\mu H=0$ is part of the exact Courant
algebroid data, $H$ is harmonic.
\end{proof}

\begin{proof}[Proof of Corollary~\ref{cor:uniqueness}]
By Theorem~\ref{thm:main}, the preferred torsion of every generalized
nilsoliton on $E$ is harmonic.  The assertion now follows from
\cite[Theorem~7.11]{FLS2024}.
\end{proof}

\begin{remark}
The group $K$ includes the scaling of the exact Courant
algebroid used in \cite{FLS2024}.  Thus Corollary~\ref{cor:uniqueness} is
not merely a uniqueness statement modulo $\Aut(\nalg)$, nor does its
scaling reduce to $g\mapsto cg$ on a fixed representative.
\end{remark}

\begin{remark}
The only geometric property of $\nalg$ used in the proof is the nilpotent
Ricci formula $\Ric_\mu=M_\mu$.  On a general Lie algebra, the Ricci operator
also contains the Killing-form and mean-curvature terms.  The argument
does not determine the sign of their contractions with $\eta^2$.
\end{remark}

\section{Refinements and examples}\label{sec:refinements}

The general inequality is sufficient for harmonicity.  In two important
classes, its left-hand side admits a simpler expression.  We record these
identities and conclude with an example showing that the soliton equation
is essential.

\subsection{The two-step identity}\label{subsec:two-step-identity}

Let $(\nalg,\mu,\bar g)$ be a two-step nilpotent metric Lie algebra.  Set
$\zalg=Z(\nalg)$ and $\valg=\zalg^\perp$.  For $z\in\zalg$, define
$J_z\in\mathfrak{so}(\valg)$ by
\begin{equation*}
 \bar g(J_zX,Y)=\bar g(z,\mu(X,Y)).
\end{equation*}
Choose an orthonormal basis $\{z_1,\ldots,z_r\}$ of the full center and
write $J_a=J_{z_a}$.  If $z_a\perp[\nalg,\nalg]$, then $J_a=0$.

\begin{proposition}\label{prop:two-step-refinement}
If
\begin{equation*}
 \alpha=\sum_{a=1}^r u_a^\flat\wedge z^a+\gamma,
 \qquad
 u_a\in\valg,
 \qquad
 \gamma\in\Lambda^2\zalg^*,
\end{equation*}
then
\begin{equation}\label{eq:two-step-refinement}
 \frac16|d_\mu\alpha|^2-2\tr(\alpha^2\Ric_\mu)
 =\sum_{a<b}|J_bu_a-J_au_b|^2.
\end{equation}
Consequently,
\begin{equation*}
 \frac16|d_\mu\alpha|^2-\tr(\alpha^2\Ric_\mu)
 =\frac1{12}|d_\mu\alpha|^2
 +\frac12\sum_{a<b}|J_bu_a-J_au_b|^2.
\end{equation*}
\end{proposition}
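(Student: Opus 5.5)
Write $\alpha=\beta+\gamma$ with $\beta=\sum_a u_a^\flat\wedge z^a$. I will compute the two sides of \eqref{eq:two-step-refinement} directly in the splitting $\nalg=\valg\oplus\zalg$, using that $\mu$ maps $\valg\times\valg$ into $\zalg$ and vanishes whenever one argument lies in $\zalg$. The second displayed identity then follows exactly as in the proof of Theorem~\ref{thm:hodge-ricci}: write $\frac16|d_\mu\alpha|^2-\tr(\alpha^2\Ric_\mu)$ as $\frac1{12}|d_\mu\alpha|^2$ plus one half of the left-hand side of \eqref{eq:two-step-refinement}.

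\emph{Step 1 (the differential).} By the defining formula for $d_\mu$, only triples $X,Y\in\valg$, $Z$ arbitrary contribute, and only through $\alpha(\mu(X,Y),Z)$. Since $\mu(X,Y)\in\zalg$, $\gamma$ contributes through $Z\in\zalg$ and $\beta$ through $Z\in\valg$. I expect the $\gamma$-contribution to cancel: the corresponding components $(d_\mu\alpha)(X,Y,z_b)$ involve $\gamma(\mu(X,Y),z_b)$ together with possible mixed terms, so I will check them honestly. The $\valg\valg\valg$ components are
\begin{equation*}
 (d_\mu\beta)(X,Y,Z)=-\sum_a\Bigl(\bar g(J_aX,Y)\bar g(u_a,Z)-\bar g(J_aX,Z)\bar g(u_a,Y)+\bar g(J_aY,Z)\bar g(u_a,X)\Bigr),
\end{equation*}
i.e.\ $d_\mu\beta=-\sum_a\omega_a\wedge u_a^\flat$ with $\omega_a=\bar g(J_a\cdot,\cdot)$.

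\emph{Step 2 (the trace).} From \eqref{eq:moment-map-Ricci} we get $\Ric_\mu|_\valg=\frac12\sum_aJ_a^2$, $\bar g(\Ric_\mu z_a,z_b)=\frac14\tr(J_a^tJ_b)$, and $\Ric_\mu$ preserves $\valg$ and $\zalg$. Compute $\alpha^2$ in block form: $\iota_X\alpha=-\sum_a\bar g(u_a,X)z^a$ for $X\in\valg$ and $\iota_{z_a}\alpha=u_a^\flat+\iota_{z_a}\gamma$. This gives $\tr(\alpha^2\Ric_\mu)=\sum_{a,b}\bigl(\frac12\bar g(u_a,\sum_cJ_c^2u_a)\delta\text{-terms}+\frac14\tr(J_a^tJ_b)(\bar g(u_a,u_b)+\bar g(\iota_{z_a}\gamma,\iota_{z_b}\gamma))\bigr)$, to be organized carefully.

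\emph{Step 3 (comparison).} Expand $|\sum_a\omega_a\wedge u_a^\flat|^2$ under \eqref{eq:FLS-form-pairing}. This yields diagonal terms $|u_a|^2|J_a|^2$-type and $|J_au_a|^2$ terms, plus cross terms $\bar g(J_au_b,J_bu_a)$, $\bar g(J_au_a,J_bu_b)$, and $\tr(J_a^tJ_b)\bar g(u_a,u_b)$. Subtracting $2\tr(\alpha^2\Ric_\mu)$ should cancel the $\tr(J_a^tJ_b)$ terms and the $|J_au_a|^2$-type terms, leaving $\sum_{a<b}\bigl(|J_bu_a|^2+|J_au_b|^2-2\bar g(J_bu_a,J_au_b)\bigr)$. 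The $\gamma$-contribution $\frac14\tr(J_a^tJ_b)\bar g(\iota_{z_a}\gamma,\iota_{z_b}\gamma)$ must also vanish or cancel, and this is the main obstacle. Since the statement has no $\gamma$-term, I expect the trace term to be cancelled by the $\zalg$-components of $d_\mu\alpha$ from Step 1, and I will verify this in a basis diagonalizing $\gamma$.

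The bookkeeping of normalization constants ($\frac16$, the factor $2$ from ordered pairs, and the $\frac14$) is where errors are likely. As a check I will first run the computation in the Heisenberg case ($r=1$), where the right-hand side is $0$, and with $\gamma$ present in a case $r\ge2$.
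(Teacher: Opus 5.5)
Your strategy is the paper's: split $\alpha=\beta+\gamma$, note that $d_\mu\beta$ and $d_\mu\gamma$ have different bidegrees, use the block form of $\alpha^2$ and the two-step Ricci formulas, then expand the norm. As written, though, it does not yet prove the identity, and you treat the point you call the main obstacle inconsistently. In Step~1 you expect the $\gamma$-contribution to $d_\mu\alpha$ to cancel. It does not. For $X,Y\in\valg$ there are no mixed terms in $(d_\mu\alpha)(X,Y,z_b)$, because $\mu(X,z_b)=\mu(Y,z_b)=0$ and $\beta$ vanishes on $\zalg\times\zalg$. Hence $(d_\mu\alpha)(X,Y,z_b)=-\sum_a\omega_a(X,Y)\,q_a(z_b)$ with $q_a=\iota_{z_a}\gamma$, so $d_\mu\gamma=-\sum_a\omega_a\wedge q_a$, which is in general nonzero. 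Had it vanished, the generally nonzero term $2\tr(\gamma^2\Ric_\mu|_{\zalg})$ would survive and the identity would fail. What is true is the mechanism you guess in Step~3, and it needs no basis adapted to $\gamma$. The $\zalg$-slot can sit in three positions and $\bar g(\omega_a,\omega_b)=\tr(J_a^tJ_b)$, so
\[
 \frac16|d_\mu\gamma|^2=\frac12\sum_{a,b}\tr(J_a^tJ_b)\,\bar g(q_a,q_b)=2\tr(\gamma^2\Ric_\mu|_{\zalg}),
\]
using $\bar g(\gamma^2z_a,z_b)=\bar g(q_a,q_b)$ and $\bar g(\Ric_\mu z_a,z_b)=\frac14\tr(J_a^tJ_b)$.

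The $\beta$-part is likewise only predicted, and your list of terms is not right. The identity that does the work is
\[
 \frac16\bar g(\xi\wedge x^\flat,\psi\wedge y^\flat)=\frac12\bar g(\xi,\psi)\bar g(x,y)-\bar g(\iota_y\xi,\iota_x\psi),\qquad \xi,\psi\in\Lambda^2\valg^*,\ x,y\in\valg .
\]
With $\iota_u\omega_a=(J_au)^\flat$ it gives $\frac16|d_\mu\beta|^2=\sum_{a,b}\bigl(\frac12\tr(J_a^tJ_b)\bar g(u_a,u_b)-\bar g(J_au_b,J_bu_a)\bigr)$. In particular, no $\bar g(J_au_a,J_bu_b)$ terms occur. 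Your Step~2 blocks give
\[
 2\tr(\beta^2\Ric_\mu)=-\sum_{a,b}|J_bu_a|^2+\frac12\sum_{a,b}\tr(J_a^tJ_b)\bar g(u_a,u_b).
\]
The off-diagonal block of $\alpha^2$, which is nonzero when $\gamma\neq0$, drops out because $\Ric_\mu(\valg,\zalg)=0$. Subtracting, the $\tr(J_a^tJ_b)$ terms cancel and the $a=b$ terms cancel. Pairing $(a,b)$ with $(b,a)$ then gives $\sum_{a<b}|J_bu_a-J_au_b|^2$. Once you supply these two computations, your outline is the paper's proof.
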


\begin{proof}
Set $\omega_a(X,Y)=\bar g(J_aX,Y)$ and
$g_{ab}=\frac12\bar g(\omega_a,\omega_b)$.  The two-step Ricci formulas
are
\begin{equation*}
 \Ric_\mu|_{\valg}=\frac12\sum_aJ_a^2,
 \qquad
 \bar g(\Ric_\mu z_a,z_b)=\frac12g_{ab},
 \qquad
 \Ric_\mu(\valg,\zalg)=0;
\end{equation*}
see \cite[Proposition~2.5]{Eberlein1994}.  Write
$\alpha_{11}=\sum_a u_a^\flat\wedge z^a$.  Since $dz^a=-\omega_a$,
\begin{equation*}
 d_\mu\alpha_{11}=\sum_a\omega_a\wedge u_a^\flat,
 \qquad
 d_\mu\gamma=-\sum_a\omega_a\wedge\iota_{z_a}\gamma.
\end{equation*}
The two terms have different bidegrees and are orthogonal.

For two-forms $\xi,\psi\in\Lambda^2\valg^*$ and $x,y\in\valg$, one has
\begin{equation*}
 \frac16\bar g(\xi\wedge x^\flat,\psi\wedge y^\flat)
 =\frac12\bar g(\xi,\psi)\bar g(x,y)
 -\bar g(\iota_y\xi,\iota_x\psi).
\end{equation*}
Using this identity and $\iota_u\omega_a=(J_au)^\flat$, we obtain
\begin{equation*}
 \frac16|d_\mu\alpha_{11}|^2
 =\sum_{a,b}\left(
 g_{ab}\bar g(u_a,u_b)
 -\bar g(J_au_b,J_bu_a)\right).
\end{equation*}
The Ricci formulas give
\begin{equation*}
 2\tr(\alpha_{11}^2\Ric_\mu)
 =-\sum_{a,b}|J_bu_a|^2
 +\sum_{a,b}g_{ab}\bar g(u_a,u_b).
\end{equation*}
Subtracting the two expressions and combining the terms indexed by
$(a,b)$ and $(b,a)$ gives
\begin{equation*}
 \frac16|d_\mu\alpha_{11}|^2-2\tr(\alpha_{11}^2\Ric_\mu)
 =\sum_{a<b}|J_bu_a-J_au_b|^2.
\end{equation*}

Set $q_a=\iota_{z_a}\gamma$.  Then
\begin{equation*}
 \frac16|d_\mu\gamma|^2
 =\sum_{a,b}g_{ab}\bar g(q_a,q_b)
 =2\tr(\gamma^2\Ric_\mu|_{\zalg}).
\end{equation*}
The mixed part of $(\alpha_{11}+\gamma)^2$ does not contribute because
$\Ric_\mu(\valg,\zalg)=0$.  Formula~\eqref{eq:two-step-refinement}
follows.
\end{proof}

\begin{remark}
With respect to the bigrading
$\Lambda^{p,q}=\Lambda^p\valg^*\wedge\Lambda^q\zalg^*$, one has
\begin{equation*}
 d_\mu=-\sum_a\omega_a\wedge\iota_{z_a},
 \qquad
 d_\mu^*=-\sum_a z^a\wedge\Lambda_a,
\end{equation*}
where, for an orthonormal basis $\{e_p\}$ of $\valg$,
\begin{equation*}
 \Lambda_a=\sum_{p<q}\omega_a(e_p,e_q)
 \iota_{e_q}\iota_{e_p}.
\end{equation*}
Hence
\begin{equation*}
 \im\bigl(d_\mu^*: \Lambda^3\nalg^*\longrightarrow
 \Lambda^2\nalg^*\bigr)
 \subset\Lambda^{1,1}\oplus\Lambda^{0,2},
\end{equation*}
which is precisely the space appearing in
Proposition~\ref{prop:two-step-refinement}.
\end{remark}

\subsection{The almost-abelian identity}\label{subsec:almost-abelian}

\begin{proposition}\label{prop:almost-abelian-refinement}
Let $(\nalg,\mu,\bar g)$ be a nilpotent metric Lie algebra with a
codimension-one abelian ideal $\aalg$.  Choose a unit vector
$e_0\in\aalg^\perp$ and set
\begin{equation*}
 \nalg=\R e_0\oplus^\perp\aalg,
 \qquad
 A=\ad_{e_0}|_{\aalg}.
\end{equation*}
For every $\alpha\in\Lambda^2\aalg^*$,
\begin{equation}\label{eq:almost-abelian-refinement}
 \frac16|d_\mu\alpha|^2-2\tr(\alpha^2\Ric_\mu)
 =\frac12|\rho(A^*)\alpha|^2.
\end{equation}
Moreover,
\begin{equation*}
 \im\bigl(d_\mu^*: \Lambda^3\nalg^*\longrightarrow
 \Lambda^2\nalg^*\bigr)\subset\Lambda^2\aalg^*.
\end{equation*}
\end{proposition}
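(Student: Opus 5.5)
The plan is to compute both sides of \eqref{eq:almost-abelian-refinement} explicitly in the splitting $\nalg=\R e_0\oplus^\perp\aalg$, and then to reduce the identity to Lemma~\ref{lem:representation-contraction} using the fact that $\rho$ is a Lie algebra representation. The only nonzero brackets are $\mu(e_0,X)=AX$ for $X\in\aalg$, and $[\nalg,\nalg]\subset\aalg$.

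\emph{Step 1: $d_\mu$ on $\Lambda^2\aalg^*$.} Let $\alpha\in\Lambda^2\aalg^*$, extended by zero on $e_0$. The defining formula for $d_\mu\alpha$ vanishes on triples in $\aalg$, because $\aalg$ is abelian. For $Y,Z\in\aalg$ it gives
\begin{equation*}
(d_\mu\alpha)(e_0,Y,Z)=-\alpha(AY,Z)-\alpha(Y,AZ)=(\rho(A)\alpha)(Y,Z).
\end{equation*}
Hence $d_\mu\alpha=e^0\wedge\rho(A)\alpha$. With the full-contraction convention \eqref{eq:FLS-form-pairing}, one has $|e^0\wedge\beta|^2=3|\beta|^2$ for $\beta\in\Lambda^2\aalg^*$. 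Therefore $\frac16|d_\mu\alpha|^2=\frac12|\rho(A)\alpha|^2$.

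\emph{Step 2: the Ricci operator.} Insert the nilpotent formula $\Ric_\mu=M_\mu$ from \eqref{eq:moment-map-Ricci}.
\begin{itemize}
\item For $X,Y\in\aalg$, the first sum only sees $e_i=e_0$ and gives $-\frac12 A^*A$. The second sum sees the pairs $(e_0,e_j)$ and $(e_j,e_0)$ and gives $\frac12AA^*$.
\item The mixed entries $\bar g(\Ric_\mu e_0,X)$ vanish, since $\mu(e_0,\cdot)\in\aalg$ while $\mu(X,e_i)$ only contributes for $e_i=e_0$, and $e_0\notin[\nalg,\nalg]$.
\end{itemize}
Since $\alpha^2$ is supported on $\aalg$, we get $\tr(\alpha^2\Ric_\mu)=\frac12\tr(\alpha^2[A,A^*])$.

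\emph{Step 3: reduction to Lemma~\ref{lem:representation-contraction}.} The identity to prove becomes
\begin{equation*}
|\rho(A)\alpha|^2-|\rho(A^*)\alpha|^2=2\tr(\alpha^2[A,A^*]).
\end{equation*}
With respect to the pairing \eqref{eq:FLS-form-pairing}, $\rho(A)^*=\rho(A^*)$. Hence the left side equals
\begin{equation*}
\bar g\bigl((\rho(A^*)\rho(A)-\rho(A)\rho(A^*))\alpha,\alpha\bigr)=\bar g\bigl(\rho([A^*,A])\alpha,\alpha\bigr),
\end{equation*}
where we use that $\rho$ is a representation. Now $[A^*,A]$ is symmetric, so Lemma~\ref{lem:representation-contraction} turns this into $-2\tr(\alpha^2[A^*,A])=2\tr(\alpha^2[A,A^*])$. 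The main care needed is in checking signs and normalization constants in Steps 1 and 2, in particular the factor $3$ and the sign convention for $\rho$. Once these are fixed, Step 3 is formal.

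\emph{Step 4: the image of $d_\mu^*$.} By \eqref{eq:codifferential-pairing}, the image of $d_\mu^*$ on three-forms is orthogonal to $\ker(d_\mu|_{\Lambda^2})$. I will show that $e^0\wedge\aalg^*\subset\ker d_\mu$. First, $d_\mu e^0=0$ because $e_0\perp[\nalg,\nalg]$. Second, for $\theta\in\aalg^*$ we have $d_\mu\theta=c\,e^0\wedge\rho(A)\theta$ for a sign $c$, so $d_\mu(e^0\wedge\theta)=-e^0\wedge d_\mu\theta=0$. Hence the image of $d_\mu^*$ lies in $(e^0\wedge\aalg^*)^\perp=\Lambda^2\aalg^*$.
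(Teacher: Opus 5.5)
Your proposal is correct and follows the paper's proof. It uses the same three ingredients, $d_\mu\alpha=e^0\wedge\rho(A)\alpha$, $\Ric_\mu|_{\aalg}=\frac12[A,A^*]$ and $\rho(A)^*=\rho(A^*)$, combined with Lemma~\ref{lem:representation-contraction}; you also make explicit the representation property $[\rho(A^*),\rho(A)]=\rho([A^*,A])$, which the paper uses silently. For the image of $d_\mu^*$, your orthogonality to $e^0\wedge\aalg^*\subset\ker d_\mu$ is the same adjointness argument by which the paper obtains the explicit formula $d_\mu^*(e^0\wedge\beta+\gamma)=\rho(A^*)\beta$.
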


\begin{proof}
Since $\aalg$ is abelian,
\begin{equation*}
 d_\mu\alpha=e^0\wedge\rho(A)\alpha,
 \qquad
 \Ric_\mu|_{\aalg}=\frac12[A,A^*],
 \qquad
 \Ric_\mu(e_0,\aalg)=0.
\end{equation*}
The representation on forms satisfies $\rho(A)^*=\rho(A^*)$.  By
Lemma~\ref{lem:representation-contraction},
\begin{equation*}
\begin{split}
 \frac16|d_\mu\alpha|^2-2\tr(\alpha^2\Ric_\mu)
 &=\frac12|\rho(A)\alpha|^2
 +\frac12\bar g(\rho([A,A^*])\alpha,\alpha)\\
 &=\frac12|\rho(A^*)\alpha|^2.
\end{split}
\end{equation*}
This proves \eqref{eq:almost-abelian-refinement}.

Write $H=e^0\wedge\beta+\gamma$, where
$\beta\in\Lambda^2\aalg^*$ and $\gamma\in\Lambda^3\aalg^*$.  Since
$d_\mu\gamma=e^0\wedge\rho(A)\gamma$ and
$d_\mu(e^0\wedge\beta)=0$, adjointness gives
$d_\mu^*H=\rho(A^*)\beta\in\Lambda^2\aalg^*$.
\end{proof}

\begin{remark}
The standard filiform Lie algebra
\begin{equation*}
 L_n:\qquad [E_1,E_i]=E_{i+1},
 \qquad 2\leq i\leq n-1,
\end{equation*}
has the codimension-one abelian ideal
$\operatorname{span}\{E_2,\ldots,E_n\}$.  Thus
Proposition~\ref{prop:almost-abelian-refinement} gives a direct
specialization of the general inequality to $L_n$.
\end{remark}

We finish with a simple example showing that closedness alone does not
imply coclosedness, even in the two-step case.

\begin{example}\label{ex:closed-not-coclosed}
Let $\nalg=\mathfrak h_3\oplus\R$ and choose an orthonormal coframe
$\{e^1,e^2,e^3,e^4\}$ satisfying
\begin{equation*}
 de^1=de^2=de^4=0,
 \qquad
 de^3=-ae^{12},
 \qquad a\ne0.
\end{equation*}
For $H=e^{124}$, one has
\begin{equation*}
 dH=0,
 \qquad
 d^*H=-ae^{34}\ne0.
\end{equation*}
Indeed, $d(e^{34})=-ae^{124}$, and the formula for $d^*H$ follows by
adjointness.  With the convention $\Delta=-(dd^*+d^*d)$, one also has
$\Delta H=-a^2e^{124}$.
\end{example}


\begin{thebibliography}{99}

\bibitem{Besse1987}
A.~L. Besse,
\emph{Einstein Manifolds},
Ergeb. Math. Grenzgeb. (3), vol.~10,
Springer-Verlag, Berlin, 1987.
\href{https://doi.org/10.1007/978-3-540-74311-8}
{doi:10.1007/978-3-540-74311-8}.

\bibitem{Eberlein1994}
P.~Eberlein,
\emph{Geometry of $2$-step nilpotent groups with a left invariant metric},
Ann. Sci. \`Ecole Norm. Sup. (4) \textbf{27} (1994), no.~5,
611--660.
\href{https://doi.org/10.24033/asens.1702}
{doi:10.24033/asens.1702}.

\bibitem{FLS2024}
E.~Fusi, R.~A. Lafuente, and J.~Stanfield,
\emph{The homogeneous generalized Ricci flow},
to appear in Selecta Math. (N.S.) (2026),
\href{https://arxiv.org/abs/2404.15749}
{arXiv:2404.15749v1 [math.DG]}.

\bibitem{GFS2021}
M.~Garc\'ia-Fern\'andez and J.~Streets,
\emph{Generalized Ricci Flow},
University Lecture Series, vol.~76,
American Mathematical Society, Providence, RI, 2021.
\href{https://doi.org/10.1090/ulect/076}
{doi:10.1090/ulect/076}.

\bibitem{Gualtieri2011}
M.~Gualtieri,
\emph{Generalized complex geometry},
Ann. of Math. (2) \textbf{174} (2011), no.~1, 75--123.
\href{https://doi.org/10.4007/annals.2011.174.1.3}
{doi:10.4007/annals.2011.174.1.3}.

\bibitem{Lauret2001}
J.~Lauret,
\emph{Ricci soliton homogeneous nilmanifolds},
Math. Ann. \textbf{319} (2001), no.~4, 715--733.
\href{https://doi.org/10.1007/PL00004456}
{doi:10.1007/PL00004456}.

\bibitem{Paradiso2021}
F.~Paradiso,
\emph{Generalized Ricci flow on nilpotent Lie groups},
Forum Math. \textbf{33} (2021), no.~4, 997--1014.
\href{https://doi.org/10.1515/forum-2020-0171}
{doi:10.1515/forum-2020-0171}.

\end{thebibliography}
\end{document}